\newtheorem{theorema}{Теорема}
\newtheorem{ddef}{Определение}
\newtheorem{example}{Пример}
\newtheorem{lemma}{Лемма}
\newtheorem{Remark}{Замечание}[theorema]
\newtheorem{lemm_conseq}{Следствие}[lemma]
\newenvironment{proof}{\par\noindent{\bf Доказательство.}}{\hfill$\scriptstyle\blacksquare$}
\renewcommand \thesection {\@arabic\c@section.}
\renewcommand\thesubsection {\thesection\@arabic\c@subsection.}
\renewcommand\thesubsubsection {\thesubsection\@arabic\c@subsubsection.}
\renewcommand\theparagraph {\thesubsubsection\@arabic\c@paragraph.}
\begin{document}
\selectlanguage{russian}

\title{
Кратчайшие и минимальные дизъюнктивные нормальные формы полных функций
}

\author{Ю.~В. Максимов\thanks{Исследование выполнено при финансовой поддержке РФФИ в рамках научного проекта \No\,14-07-31277 мол\_а; а также при частичной поддержке Лаборатории структурных методов анализа данных в предсказательном моделировании ФУПМ МФТИ, грант правительства РФ дог. 11.G34.31.0073.
}\\
ПреМоЛаб МФТИ и ИППИ РАН
}
\date{}

\maketitle

\begin{abstract}
Почти всем булевым функциям от $n$ переменных, число нулей $k$ которых не превышает $\log_2 n - \log_2\log_2 n +1$, может быть сопоставлена некоторая булева функция от $2^{k-1} - 1$ переменой с $k$ нулями (\textit{полная} функция) так, что сложность реализации исходной функции в классе дизъюнктивных нормальных форм (ДНФ) определяется исключительно сложностью реализации полной функции. Это было установлено ранее в работах \cite{zk85dan,zk86}.

В данной работе установлена асимптотически точная граница для минимальное число литералов и конъюнкций, содержащихся в ДНФ полной функции.\vspace{0.5cm} \\
\noindent \textbf{Ключевые слова:}
булевы функции, дизъюнктивная нормальная форма, сложность реализации булевых функций дизъюнктивными нормальными формами.
\end{abstract}

\section{Введение}

К задаче построения простых дизъюнктивных нормальных форм~(ДНФ) булевых функций приводит большое число практических задач из различных областей дискретной оптимизации, таких как построение эффективных $SAT$--солверов, вычисление характеристических функций классов в задачах машинного обучения, синтез управляющих схем и многие другие \cite{ka09,zh78,boros00,boros11,kor09,kor12}.

В работе исследуются булевы функции, заданные в конъюнктивной нормальной форме~(КНФ) произведениями вида
\begin{gather}\label{general_eq}
    f(x_1,...,x_n) = \bigwedge\limits_{i=1}^k \left( x_1^{{\alpha}_i^1} \vee x_2^{{\alpha}_i^2} \vee ...\vee x_n^{{\alpha}_i^n}\right)
\end{gather}


Трудоемкость прямого построения сокращенной ДНФ путем перемножения скобок конъюнктивной нормальной формы (КНФ) и последующего ее упрощения до минимальной ДНФ послужила толчком для разработки эффективных методов аппроксимации оптимальных по сложности ДНФ. Как правило, при исследовании сложности результирующей ДНФ оцениваются две основные меры сложности: \textit{длина}, равная числу входящих в нее конъюнкций, и \textit{ранг}, равный суммарному числу входящих в нее литералов. Дизъюнктивная нормальная форма с наименьшим значением ранга называется \textit{минимальной}, а ДНФ с минимальным числом конъюнкций --- \textit{кратчайшей}. Под \textit{линейной} мерой сложности ДНФ мы будем понимать произвольную выпуклую комбинацию длины и ранга. Впервые указанная мера была определена в работе \cite{veb79}. Рассмотрение линейных мер сложности мотивировано неэквивалентностью понятий минимальной и кратчайшей ДНФ, установленной Ю.~И.~Журавлёвым в работе \cite{zmin}.

С.В. Яблонским, Ю.И. Журавлёвым, А.Г. Дьяконовым и другими исследователями было показано, что при достаточно малом числе скобок в произведении \ref{general_eq} возможны эффективные методы построения ДНФ близких к минимальным и кратчайшим \cite{zk85dan,zk86,dj01,dj02}.

В общем случае, сложность преобразования КНФ в ДНФ достаточно велика, что не позволяет осуществить его в пространстве большой размерности~\cite{Golea97}. Тем не менее в работе \cite{zk85dan} было показано, что если число сомножителей в произведении \ref{general_eq} не превосходит $k = \log_2 n - \log_2\log_2 n + 1$, то построение искомой ДНФ сводится к построению ДНФ некоторой функции от существенно меньшего числа переменных и при этом сложность сведения невелика. Более того, почти все исходные функции с указанным числом нулей и переменных сводимы к одной и той же булевой функции, названной в работе \cite{zk85dan} \textit{полной}.

Основным результатом настоящей работы является подход, позволяющий строить асимптотически минимальные ДНФ по всем линейным мерам сложности для широкого класса булевых функций. В частности, предложенные в работе алгоритмы позволяют строить и асимптотически минимальную и асимптотически кратчайшую ДНФ для полной функции.

\section{Терминология}
В данной работе использован ряд стандартных терминов теории булевых функций, определенных в работах \cite{ja10,dj01}. Кратко напомним основные из них.

Матрицей нулей $M_f$ булевой функции $f(x_1,x_2,...,x_n)$ является матрица, по строкам которой последовательно выписаны все различные нули функции $f$. Обозначим $M_{i_1,...,i_t}^{j_1,...,j_s}$ подматрицу матрицы $M$, находящуюся на пересечении строк $i_1,...,i_t$ со столбцами $j_1,...,j_s$. Матрицу, не содержащую одинаковых строк, назовем \textit{тестом}.

Положим $B_k$ --- множество всех булевых векторов размерности $k$; $B_k^0$ и $B_k^1$ --- множество булевых векторов, первая координата которых равна 0 или 1 соответственно; пусть $[k,n] = {k,k+1,k+2,...,n}$ --- целочисленный отрезок от $k$ до $n$ включительно, обозначим $[n]$ отрезок $[1,n]$,
а $\chi:B_k \rightarrow [2^k-1]$ --- (взаимно-однозначное) отображение заданное правилом
    \[\chi\left((\sigma_1,...,\sigma_k)\right) = \sigma_12^{k-1} +\sigma_22^{k-2}+ ... +\sigma_k2^{0}.\]
\quad    Значение $\chi(x)$ назовем \textit{номером}, а минимум из числа единиц и числа нулей назовем \textit{весом} вектора $x, x\in B_k$.

Литералом будем называть булеву переменную или ее отрицание. Литерал $x_i$ ассоциируем со столбцом $M_{[k]}^i$, а литерал $\bar{x_i}$ с --- покоординатным отрицанием столбца $M_{[k]}^i, 1\le i \le n$. Далее под термином литерал $x_i^{\sigma_i}$ будет часто подразумеваться ассоциированный с ним булев вектор. Дизъюнкции (конъюнкции) литералов сопоставим дизъюнкцию(конъюнкцию) ассоциированных с ними булевых векторов.

Отображение $\chi$ индуцирует линейный порядок на векторах из $B_k$ в соответствии с естественным порядком на прообразах. Обозначим его $\prec_\chi$. Обозначим $\prec$ стандартный частичный порядок на $B_k$.

Скажем, что литерал $x_i^{\sigma_i} \prec x_j^{\sigma_j}$($x_i^{\sigma_i} \prec_\chi x_j^{\sigma_j}$) для функции $f$, если соответствующие им столбцы матрицы (или их отрицания) связаны таким же соотношением относительно частичного порядка $\prec$($\prec_\chi$). Под весом и номером литерала будем понимать вес и номер соответствующих ему векторов матрицы нулей.

\begin{ddef}
Ненулевой вектор $\alpha \in B_k$ назовем \textit{разложимым} по векторам $\alpha_1, ...,\alpha_t$, если выполнены следующие равенства
\[
\alpha = \alpha_1 \vee \alpha_2 \vee ... \vee \alpha_t, \qquad <\!\bar{\alpha},\alpha_1\!> = <\!\bar{\alpha}, \alpha_2\!> = ... = <\!\bar{\alpha},\alpha_t\!> = 0,
\]
где под символом $<\!\alpha, \beta\!>$ понимается скалярное произведение векторов $\alpha$ и $\beta$.
\end{ddef}
\begin{ddef}
Ненулевой вектор $\alpha \in B_k$ назовем \textit{ортогонально разложимым} по векторам $\alpha_1, ...,\alpha_t$, если выполнены следующие равенства
\[\begin{cases}
\alpha = \alpha_1 \oplus \alpha_2 \oplus ... \oplus \alpha_t\\
\alpha = \alpha_1 \vee \alpha_2 \vee ... \vee \alpha_t
\end{cases}\]
\end{ddef}

Обозначим $\mathbb{I}_k$ вектор размерности $k$ состоящий только из единиц. В случае $\alpha = \mathbb{I}_k$ в условиях предыдущего определения назовем вектора $\{\alpha_i\}_{i=1}^t$ \textit{ортогональным разложением единицы}. Заметим, что если вектор $\alpha$ разложим по векторам $\{\alpha_i\}_{i\in I}$, то $\chi(\alpha) > \chi(\alpha_i), i\in I$.

В теории сложности ДНФ традиционно рассматриваются такие характеристики, как число литералов литералов и конъюнкций входящих ДНФ. Число литералов ДНФ $D$ обычно обозначается $rank(D)$, число конъюнкций $D$ обозначается $|D|$ или $term(D)$. В работе \cite{veb79} рассматривались произвольные выпуклые\footnote{В оригинальной работе такие меры сложности назывались линейными мерами.} комбинации числа литералов и числа конъюнкций ДНФ, для которых в работе будет использовано обозначение $conv_{\alpha, \beta}(D) = \alpha\cdot size(D) + \beta\cdot term(D), \alpha > 0, \beta > 0, \alpha + \beta = 1$. В настоящей работе будут рассматриваться только описанные выше меры сложности.

Пусть $\pi_n$ --- класс (инвариантных) преобразований Шеннона-Поварова булевых функций от $n$ переменных, таких, что всякое преобразование $\pi \in \pi_n$ сопоставляет упорядоченному набору переменных $(x_1,x_2,...,x_n)$ набор $(y_1,y_2,...,y_n) = (x_{i_1}^{\sigma_{i_1}},x_{i_2}^{\sigma_{i_2}},...,x_{i_n}^{\sigma_{i_n}})$, где $(i_1,...,i_n)$ --- произвольная перестановка отрезка $[1,n]$, $\sigma_i \in \{0,1\}$ при $i\in[1,n]$. Отметим, что булева функция и ее образ при преобразовании Шеннона-Поварова имеют одинаковую сложность при представлении их дизъюнктивными нормальными формами.

\textit{Цепью} называется монотонная(в смысле отображения $\chi$) упорядоченная последовательность точек из $B_k$ без повторений в которой каждая следующая точка отличается от предыдущей только в одной координате. Для последовательности точек $\alpha_1,\alpha_2,\alpha_3$ входящей в цепь $C$, назовем точку $\alpha$ \textit{дополнительной}, если $\alpha_1, \alpha,\alpha_3$ цепь и, кроме того, $\alpha_2\neq\alpha$. Цепь назовем \textit{симметричной}, если ее первый и последний элементы имеют одинаковый вес.

Обозначим $Z_f$ --- множество нулей функции $f$, а $N_f$ --- множество ее единиц.

Определим вектора $\{e_i^k\}_{i=1}^{k}$ равенствами $e_{i+1}^k = \chi^{-1}(2^i), 0 \le i < k$.

Конъюнкция $K$ называется импликантой функции $f(x_1,x_2,...,x_n)$ если $N_K \subseteq N_f$. Импликанта $K$ называется простой, если из $K$ не может быть вычеркнут ни один литерал, так чтобы полученная конъюнкция была импликантой $f(x_1,x_2,...,x_n)$. Термины \textit{простая импликанта} $f$ и \textit{несократимая конъюнкция} входящая в сокращенную ДНФ функции $f$ употребляются в работе как синонимы.

Обозначим $P^n_k$ класс булевых функций от $n$ переменных, имеющих в точности $k$ различных нулей. Произвольной функции $f \in P_k^n$ сопоставим функцию $g \in P_k^l$ таким образом, что
\begin{enumerate}
\item   в матрице $M_f$ отсутствуют нулевой и единичный столбцы;
\item   одинаковые столбцы в $M_g$ расположены последовательно;
\item   для любой пары столбцов, один из которых является отрицанием другого, в матрице $M_g$ присутствует не более одного.
\end{enumerate}

Заметим, что инвариантными преобразованиями Шеннона-Поварова всякая булева функция, не имеющая столбцов веса 0, может быть записана в таком виде(с сохранением сложности в классе ДНФ).
Булева функция $g$ называется \textit{правильной}, если ее матрица нулей удовлетворяет условиям~$1-3$. Если, кроме того, матрица нулей $g$ не содержит одинаковых столбцов, то $g$ называется \textit{приведенной}.

Приведенную функцию, принадлежащую классу $P_k^{2^{k-1}-1}$ назовем \textit{полной}. Обозначим $\mathbb{F}_k$ класс всех полных функций, имеющих ровно $k$ нулей. Полную булеву функцию класса $\mathbb{F}_k$, столбцы матрицы нулей которой упорядочены по возрастанию в соответствии с отображением $\chi$, назовем канонической (полной) функцией и обозначим ее $F_k$.

Пусть $ind$ --- функция, сопоставляющая всякому бинарному вектору число его ненулевых компонент. Далее нам понадобится еще одна полная функция $G_k$, такая что столбец матрицы нулей $G_k$ совпадает с соответствующим столбцом матрицы нулей $F_k$, если число единиц в последнем не больше чем $k/2$, и дополнителен к столбцу матрицы нулей $F_k$ в противном случае.

Известно \cite{zk85dan,zk86}, что всякая правильная булева функция $\phi$ представима в виде
\begin{multline}\label{formula_reduction}
    D_{\phi}(x_1, x_2, ..., x_n) = D_F(x_{i_1}, x_{i_2}, ...x_{i_t}) \vee D_2(x_{i_1}, x_{i_1+1}, ..., x_{i_2-1}) \vee ... \vee D_2(x_{i_t}, x_{i_t+1}, ..., x_{i_{t+1}-1})
\end{multline}
где $i_1 = 1$, а $i_{t+1} = n+1$; столбцы с номерами $i_s, i_{s+1}, ..., i_{s+1}-1$ равны при $1\le s \le t$; набор столбцов с номерами $i_1,i_2, ..., i_t$ состоит в точности из $t$ различных столбцов. Здесь $D_F $ в формуле (\ref{formula_reduction}) --- ДНФ соответствующей $\phi$ приведенный функции $F$, а $D_2(x_{i_s}, x_{i_{s+1}}, ..., x_{i_{s+1} - 1}) = x_{i_s} \bar{x}_{i_s + 1} \vee x_{i_s+1} \bar{x}_{i_s + 2} \vee ... \vee x_{i_{s+1}-1} \bar{x}_{i_s}$ при $1\le s \le t$. Отметим, что указанным способом можно сводить построение ДНФ произвольной булевой функции к построению ДНФ соответствующей приведенной функции при этом сложность сведения линейна.

    Пусть далее $\mu$ --- равномерная вероятностная мера на множестве булевых функций из $P_k^n$. В работах \cite{zk85dan,zk86} была установлена следующая  теорема
\begin{theorema}
Почти всем правильным булевым функциям от $n$ переменных, которые имеют не более $k < \log_2 n - \log_2 \log_2 n + 1$ нулей, при $n\rightarrow\infty$ по формуле \ref{formula_reduction} ставится в соответствие полная булева функция.
\end{theorema}

    В настоящей работе предполагается, что всякая рассматриваемая функция задана явно своей матрицей нулей.


\begin{example}
Рассмотрим правильную булеву функцию $\phi(x_1, x_2, ..., x_{12})$ заданную матрицей
\[M_{\phi} = \left(
              \begin{array}{cccccccccccc}
                1 & 1 & 0 & 0 & 0 & 1 & 1 & 0 & 1 & 1 & 0 & 0\\
                1 & 1 & 1 & 1 & 1 & 1 & 1 & 0 & 0 & 0 & 0 & 1\\
                1 & 1 & 1 & 1 & 1 & 0 & 0 & 1 & 0 & 1 & 1 & 0\\
                0 & 0 & 0 & 0 & 0 & 0 & 0 & 0 & 0 & 1 & 0 & 1\\
              \end{array}
            \right)\]

Указанной функции могут быть сопоставлены $2^7\cdot7!$ полных функций с различными матрицами нулей, в том числе $F_4$ и $G_4$ с матрицами нулей
\[M_{F_4} = \left(
              \begin{array}{ccccccc}
                0 & 0 & 0 & 0 & 0 & 0 & 0 \\
                0 & 0 & 0 & 1 & 1 & 1 & 1 \\
                0 & 1 & 1 & 0 & 0 & 1 & 1 \\
                1 & 0 & 1 & 0 & 1 & 0 & 1 \\
              \end{array}
            \right) \qquad M_{G_4} = \left(
              \begin{array}{ccccccc}
                0 & 0 & 0 & 0 & 0 & 0 & 1 \\
                0 & 0 & 0 & 1 & 1 & 1 & 0 \\
                0 & 1 & 1 & 0 & 0 & 1 & 0 \\
                1 & 0 & 1 & 0 & 1 & 0 & 0 \\
              \end{array}
            \right)
\]
а ДНФ функции $\phi$ представима в виде
\begin{multline}
D_{\phi(x_1, x_2, ..., x_{12})} = (x_1\bar{x}_2 \vee x_2\bar{x}_1) \vee (x_3\bar{x}_4 \vee x_4\bar{x}_5 \vee x_5\bar{x}_3) \vee (x_6\bar{x}_7 \vee x_7\bar{x}_6) \vee D_{F_4(\bar{x}_1, x_8, \bar{x}_6, \bar{x}_{10}, x_{12}, x_3, x_9)}.
\end{multline}
где $D_{F_4}$ --- ДНФ соответствующей полной функции.
\end{example}

\section{Известные подходы и результаты}
В работах Ю.И. Журавлёва и А.Ю. Когана были предложены исторические первые редукционный и аппроксимационной алгоритмы построения ДНФ булевых функций с малым числом нулей \cite{zk85dan,zk86}. Наилучшими теоретическими оценками длины и ранга ДНФ из предложенных обладает редукционный алгоритм. Длина построенной редукционным алгоритмом ДНФ для полной функции асимптотически равна $2n(1+ o(1))$, а ее ранг равен $4n(1+ o(1))$. Вычислительно более эффективная модификация этого метода предложена в работах \cite{me12a,me13}.

Существенным недостатком предложенного алгоритма является его относительная трудоемкость и высокие требования к памяти. В частности, при построении ДНФ редукционным алгоритмом после каждой произведенной итерации необходимо было сохранять все множество промежуточных результатов. Таким образом, при построении ДНФ полной функции от $n$ переменных требовалось как минимум $\Omega(n\log_2^2n)$ бит памяти для хранения элементов матриц получаемых в процессе работы алгоритма.

Позднее А.Г. Дьяконовым в работах \cite{dj01,dj02} был предложен существенно более эффективный в смысле вычислительной сложности алгоритм, позволяющий строить ДНФ полной функции длины асимптотически равной $n(1+o(1))$, однако ранг построенной ДНФ был равен $\Theta(n\sqrt{\log_2 n})$. Таким образом, несмотря на относительно простую процедуру обучения, время работы методов, основанных на ДНФ, построенных этим алгоритмом может быть достаточно было велико.

Также А.Г. Дьяконовым в работе \cite{dj02} был предложен тестовый подход к построению ДНФ. Указанный подход оказывается успешным для построения ДНФ булевых функций с малым числом нулей достаточно общего вида, однако теоретические верхние оценки длины и ранга построенных им ДНФ, в частности ДНФ полной функции, оказываются зачастую сильно завышенными. Интересное обобщение предложенного А.Г. Дьяконовым метода дано в работе~\cite{mu06}.

В настоящей работе предложен алгоритм построения ДНФ полной функции, доставляющий (асимптотически) минимальные известные оценки и числа литералов, и числа конъюнкций ДНФ полной функции. Число литералов в построенной им ДНФ равно $3n(1+o(1))$, а число конъюнкций равно $n(1+o(1))$. Показано, что все указанные оценки асимптотически точны при $n\rightarrow\infty$. Кроме того, предлагаемый алгоритм построения ДНФ полной функции, требует сравнительно небольшого объем памяти, что позволяет использовать его при решении прикладных задач. Предварительная версия данного метода анонсирована автором в кратком сообщении~\cite{me12b}.

\section{Нижняя оценка ранга ДНФ полной функции}
\textit{Соседними} назовем булевы вектора одинаковой размерности, находящиеся на расстоянии 1 в метрике Хемминга. Рассмотрим произвольную приведенную булеву функцию $f(x_1,...,x_n)$ не имеющую соседних нулевых точек; обозначим $M_f$ ее матрицу нулей и зафиксируем ее произвольную ДНФ $D$.

Будем считать, что всякая рассматриваемая далее конъюнкция несократимая, то есть является простой импликантой $f$. Аналогично будем полагать, что всякая рассматриваемая в этом параграфе ДНФ состоит исключительно из простых импликант. Кроме того, будем полагать, что каждая из рассматриваемых далее матриц является тестом.

В процессе доказательства основного утверждения, нам будет необходима следующая теорема, принадлежащая Ю.~И.~Журавлеву \cite{zh58}
\begin{theorema}\label{caption_criterion}
Конъюнкция $K$ поглощается совокупностью конъюнкций {$K_1,...,K_t$}~{,}~если для конъюнкций $K_1', ..., K_t'$~{,}~которые получены вычеркиванием литералов, содержащихся в $K$ из $K_1,...,K_t$ соответственно, выполнено
\[K_1' \vee K_2' \vee ... \vee K_t' \equiv 1\]
если при этом, конъюнкция, из которой были вычеркнуты все литералы, полагается тождественно равной единице.
\end{theorema}

\begin{ddef}
\textit{Разрезанием} матрицы $M$, являющейся тестом, на $t, t\ge1$ частей, назовем совокупность подматриц $\{M_1\}_{i=1}^t$ таких, что
\begin{enumerate}
\item Матрицы $M_1, M_2,...,M_t$ имеют одинаковое число столбцов;
\item каждая строка матрицы $M$ присутствует хотя бы в одной матрице $M_j, 1\le j \le t$;
\item число строк матрицы $M$ совпадает с суммой числа строк в матрицах $\{M_i\}_{i=1}^t$.
\end{enumerate}
\end{ddef}

\begin{lemma}\label{literal_1}
Всякий литерал функции $f$ входит не менее чем в одну конъюнкцию.
\end{lemma}
\begin{lemma}\label{literal_coverage}
Пусть литерал $x_i^{\sigma_i}$ разложим по литералам $x_{i_1}^{\sigma_{i_1}\oplus1}, x_{i_2}^{\sigma_{i_2}\oplus1},...$, $x_{i_t}^{\sigma_{i_t}\oplus1}$.
Пусть $\beta = (\beta_1,\beta_2,...,\beta_n)$ --- такая точка булева куба, что $\beta_i = \sigma_i, \beta_{i_1} = \sigma_{i_1}, \beta_{i_2} = \sigma_{i_2},..., \beta_{i_t} = \sigma_{i_t}$.
Тогда конъюнкция $K=x_i^{\sigma_i}x_{i_1}^{\sigma_{i_1}} x_{i_2}^{\sigma_{i_2}}\cdots x_{i_t}^{\sigma_{i_t}}$ является допустимой для $f$ и, более того, $K[\beta]~=~1$
\end{lemma}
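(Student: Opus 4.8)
\medskip
\noindent\textbf{Plan of proof.} The statement has two parts, and the equality $K[\beta]=1$ should be settled first, since it is immediate: by hypothesis $\beta_i=\sigma_i$ and $\beta_{i_j}=\sigma_{i_j}$ for $j=1,\dots,t$, so every literal of $K=x_i^{\sigma_i}x_{i_1}^{\sigma_{i_1}}\cdots x_{i_t}^{\sigma_{i_t}}$ takes the value $1$ on $\beta$, whence $K[\beta]=1$. The admissibility of $K$ is where the (modest) content lies, and for it the plan is first to set up a dictionary between the matrix $M_f$ and satisfaction of the clauses of $f$.

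Write $f=\bigwedge_{l=1}^{k}C_l$ with $C_l=\bigvee_{m=1}^{n}x_m^{\alpha_l^m}$, and for a literal $\ell$ let $v(\ell)\in B_k$ denote its characteristic vector, i.e.\ the column $M_{[k]}^m$ of $M_f$ when $\ell=x_m$ and its bitwise negation when $\ell=\bar x_m$, so that $v(x_m^{\tau\oplus1})=\overline{v(x_m^{\tau})}$. By the construction of $M_f$, the $l$-th coordinate of $v(x_m^{\tau})$ equals $1$ exactly when the partial assignment $x_m\mapsto\tau$ already satisfies the clause $C_l$. From this I would extract the form of admissibility used below: if $L=\ell_1\cdots\ell_r$ is an elementary conjunction over pairwise distinct variables and $v(\ell_1)\vee\cdots\vee v(\ell_r)=\mathbb{I}_k$, then $L$ is an admissible conjunction of $f$ --- any $\beta$ with $L[\beta]=1$ agrees with each $\ell_s$, hence satisfies $C_l$ whenever the $l$-th coordinate of some $v(\ell_s)$ is $1$, i.e.\ for every $l\in[k]$, so $f[\beta]=1$ and $N_L\subseteq N_f$.

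It then remains to check the disjunction condition for the literals of $K$, which are $x_i^{\sigma_i},x_{i_1}^{\sigma_{i_1}},\dots,x_{i_t}^{\sigma_{i_t}}$. Here the covering hypothesis is invoked: by the definition of $x_i^{\sigma_i}$ being covered by $x_{i_1}^{\sigma_{i_1}\oplus1},\dots,x_{i_t}^{\sigma_{i_t}\oplus1}$ one has $v(x_i^{\sigma_i})=v(x_{i_1}^{\sigma_{i_1}\oplus1})\vee\cdots\vee v(x_{i_t}^{\sigma_{i_t}\oplus1})$; complementing bitwise, using De Morgan and $v(x_{i_j}^{\sigma_{i_j}\oplus1})=\overline{v(x_{i_j}^{\sigma_{i_j}})}$, this becomes $\overline{v(x_i^{\sigma_i})}=v(x_{i_1}^{\sigma_{i_1}})\wedge\cdots\wedge v(x_{i_t}^{\sigma_{i_t}})\le v(x_{i_1}^{\sigma_{i_1}})$, so that $v(x_i^{\sigma_i})\vee v(x_{i_1}^{\sigma_{i_1}})\vee\cdots\vee v(x_{i_t}^{\sigma_{i_t}})\ge v(x_i^{\sigma_i})\vee\overline{v(x_i^{\sigma_i})}=\mathbb{I}_k$. (One could equally use only the inclusion $v(x_{i_j}^{\sigma_{i_j}\oplus1})\le v(x_i^{\sigma_i})$ for a single $j$, which by itself forces the disjunction to be $\mathbb{I}_k$.) Feeding this into the criterion of the preceding paragraph shows that $K$ is an admissible conjunction of $f$, completing the argument.

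There is no real obstacle; the care needed is purely in the bookkeeping. One has to keep the sign conventions straight --- the literals of $K$ carry the exponents $\sigma_{i_j}$ whereas the covering literals carry $\sigma_{i_j}\oplus1$, the two differing by a single bitwise complement --- and one should record that $i,i_1,\dots,i_t$ are pairwise distinct, so that $K$ is a genuine elementary conjunction; the degenerate cases in which $v(x_i^{\sigma_i})$ equals $\mathbb{I}_k$ or the zero vector are harmless and can be dismissed in a line.
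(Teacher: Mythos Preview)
Your argument is correct. The paper states this lemma without proof, treating it as an elementary consequence of the definitions; your write-up supplies exactly the natural justification --- reading off $K[\beta]=1$ from the hypotheses on $\beta$, and deducing admissibility from the fact that the column vectors of the literals in $K$ disjoin to $\mathbb{I}_k$, which in turn follows from the covering relation by a single complementation. There is nothing to compare against; your proof is the expected one.
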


Конъюнкцию $K$ в условиях леммы назовем конъюнкцией определяемой разложением $x_i^{\sigma_i}$ по векторам $x_{i_1}^{\sigma_{i_1}\oplus1}, x_{i_2}^{\sigma_{i_2}\oplus1},...,x_{i_t}^{\sigma_{i_t}\oplus1}$. Докажем следующую важную \textit{лемму о минимальном числе литералов}

\begin{lemma}\label{literal_min_number}
Рассмотрим приведенную булеву функцию $f(x_1,...,x_n)$ заданную матрицей нулей $M_f$ и не имеющую смежных нулевых точек. Зафиксируем ее литерал $x_i^{\sigma_i}, \sigma_i \in\{0,1\}$. Справедливы следующие утверждения
\begin{enumerate}
\item[$a.$] \textbf{Случай фиксированной ДНФ.} Пусть $D$ --- некоторая ДНФ функции $f$. Выделим в $D$ конъюнкции, содержащие литерал $x_i^{\sigma_i}$ в количестве $t$ штук. Литерал $x_i^{\sigma_i}$ входит еще как минимум в одну, отличную от выделенных, конъюнкцию из $D$, если при любом разрезании матрицы $M_f$ на $t$ частей $M_1,M_2,...,M_t$ существует такая матрица $M_j$, что для функции $\phi_j$ заданной матрицей $M_j$, ни одна из выделенных конъюнкций не определяет разбиение $x_i^{\sigma_i}$.
\item[$b.$] \textbf{Равномерный случай.} Литерал $x_i^{\sigma_i}$ входит не менее чем в $t+1$ конъюнкцию каждой ДНФ функции $f$, если при любом разрезании матрицы $M_f$ на $t$ частей $M_1,M_2,...,M_t$, существует такая матрица $M_j$, что для функции $\phi_j$, заданной матрицей $M_j$, не существует набора литералов, образующего разбиение литерала $x_i^{\sigma_i}$.
\item[$c.$] Кроме того, если при разрезании матрицы $M$ на части $M_1,M_2,...$, $M_t$ для каждой из функций, определяемых матрицами $M_1,M_2,...$, $M_t$, существует (свое для каждой части) разбиение литерала $x_i^{\sigma_i}$, порождающее при этом допустимую конъюнкцию $f(x_1,x_2,...,x_n)$, то существует ДНФ $\hat D$ функции $f$, такое что литерал $x_i^{\sigma_i}$ входит не более чем в $t$ различных конъюнкций содержащихся в $\hat D$.
\end{enumerate}
\end{lemma}
\begin{lemm_conseq}
Литерал $x_i^{\sigma_i}$ входит не менее чем в две конъюнкции произвольной ДНФ~$D$, если отрицания литералов, содержащихся в конъюнкции $K\setminus x_i^{\sigma_i}$, при $K\ni x_i^{\sigma_i}$, не образуют разбиение~$x_i^{\sigma_i}$.
\end{lemm_conseq}
\begin{proof}
Отметим, что утверждение $b$ (равномерный случай) является следствием утверждения леммы в случае конкретной ДНФ. В самом деле, из существования разложений, по лемме \ref{literal_coverage} следует существование конъюнкций, основанных на указанных разложениях, и ДНФ, содержащей эти конъюнкции.

Докажем следствие, не опираясь на утверждение леммы. Согласно лемме \ref{literal_1}, литерал $x_i^{\sigma_i}$ входит как минимум в одну конъюнкцию $K=x_i^{\sigma_i}x_{i_1}^{\sigma_{i_1}}x_{i_2}^{\sigma_{i_2}}\cdots x_{i_t}^{\sigma_{i_t}}$ из $D$. Пусть отрицания литералов из $K\setminus x_i^{\sigma_i}$ не образуют разбиение $x_i^{\sigma_i}$. Каждую строку подматрицы $M_{[k]}^{i,i_1,...,i_t}$ матрицы нулей $M$ функции $f$ сложим поэлементно со строкой $(\sigma_i\oplus1, \sigma_{i_1}, \sigma_{i_2}, ..., \sigma_{i_t})$. Полученную матрицу обозначим $\hat M$. Так как отрицания литералов из $K\setminus x_i^{\sigma_i}$ не образуют разбиение $x_i^{\sigma_i}$, то в полученной матрице есть либо строка состоящая только из нулей, либо строка, начинающая с единицы и содержащая по крайней мере 2 единицы. Рассмотрим каждый случай отдельно:
\begin{itemize}
\item[$a.$] В матрице $\hat M$ есть нулевая строка. В этом случае в подматрице $M_{[k]}^{i,i_1,...,i_t}$ матрицы нулей $f$ существует подстрока $(\sigma_i, \sigma_{i_1}, \sigma_{i_2}, ..., \sigma_{i_t})$, следовательно конъюнкция $K$ покрывает один из нулей функции $f$ и не может входить в ее ДНФ.
\item[$b.$] Матрица $\hat M$ не содержит нулевых строк. В этом случае в $\hat M$ содержится строка с номером $r$, начинающаяся с единицы и содержащая по крайней мере две единицы.

    Обозначим $\alpha = (\alpha_1,\alpha_2,...,\alpha_n)$ строку матрицы $M$ с указанным номером $r$; $\beta$ --- булеву строку, отличающуюся от $\alpha$ только в разряде с номером $i$; $i_j$ номер столбца $M$, такой что $\alpha_{i_j} \oplus 1 = \sigma_{i_j}$. Заметим, что $\alpha_i = \sigma_i$.

    В силу отсутствия у $f$ соседних нулевых точек,~$f(\beta) = 1$.

    Пусть $K_0 = x_1^{\alpha_{1}} x_2^{\alpha_{2}}\cdots  x_{i-1}^{\alpha_{i-1}} x_{i+1}^{\alpha_{i+1}} \cdots  x_{i_j-1}^{\alpha_{i_j-1}} x_{i_j+1}^{\alpha_{i_j+1}} \cdots x_n^{\alpha_{n}}$; $K_1 =  x_i^{\alpha_i}x_{i_j}^{\alpha_{i_j}}$; $K_0 = x_i^{\sigma_{i}} x_{i_j}^{\sigma_{i_j}\oplus 1}K_0$; $K_2 = x_i^{\alpha_i\oplus1}x_{i_j}^{\alpha_{i_j}} K_0 = x_i^{\sigma_{i}\oplus 1} x_{i_j}^{\sigma_{i_j}\oplus 1} K_0$ при этом $K_1 \not\in D$, а $K_2 \in D$. Положим $D'$ --- ДНФ, получающаяся из $D$ вычеркиванием всех литералов содержащихся в $K_0$.

    Воспользуемся критерием поглощения. Так как $\alpha$ является нулем $f$, то $D'$ не может быть тождественно истинной. В случае, когда $K$ --- единственная конъюнкция содержащая $x_i^{\sigma_i}$ выполнено $x_i^{\sigma_i} x_{i_j}^{\sigma_{i_j}} \subseteq D'$, то есть $D' \subseteq x_i^{\sigma_i\oplus 1} \vee x_i^{\sigma_i} x_{i_j}^{\sigma_{i_j}}$; если $K_2 \in D$, то после вычеркивания литералов $x_i^{\sigma_i}$ и $x_{i_j}^{\sigma_{i_j}\oplus 1}$ из $D'$, последняя должна обратиться в тождественную единицу. Это неверно, а именно, конъюнкция $K_2$ являясь импликантой $f$, не покрывается ДНФ $D$ следовательно в рассматриваемом случае $K$ не может быть единственной конъюнкцией $D$ содержащей $x_i^{\sigma_i}$. Следствие доказано.
\end{itemize}

Доказательство основного случая утверждения $a$ проводится аналогично. Для конъюнкции $K_1$ выделим максимальный набор строк такой, что для образованной ими матрицы $M_1$ отрицания литералов $K_1$ образуют разбиение $x_i^{\sigma_i}$. Из оставшихся не выделенными строк $M$, выделим максимальный набор строк такой, что для образованной ими матрицы $M_2$ отрицания литералов $K_2$ образует разбиение $x_i^{\sigma_i}$. Аналогично поступим для конъюнкций $K_3, K_4,..., K_{t-1}$.

Обозначим $M_t$ --- матрицу, состоящую из оставшихся не выделенными строк. По условию леммы, отрицания литералов $K_t\setminus x_i^{\sigma_i}$ не образуют разбиения $x_i^{\sigma_i}$ в $M_t$. По доказанному ранее следствию существуют такие строки $\alpha \in M_t$ и $\beta$, что $\alpha$ --- ноль функции $f$, а $\beta$ --- соседняя с $\alpha$ точка, не покрываемая конъюнкцией $K_t$, такая что $f(\beta) = 1$.

Рассмотрим матрицы $M_1',M_2',...,M_{t-1}'$, получаемые из $M_1,M_2,...,M_{t-1}$ добавлением строки~$\alpha$. Заметим, что в силу максимальности матриц $M_1,M_2,...,M_{t-1}$ отрицание литералов конъюнкции $K_j$ не может определять разложение литерала $x_i^{\sigma_i}$ в матрицах $M_j', 1\le j\le t-1$, при этом всякий раз точка $\beta$ оказывается непокрытой. Следовательно для ДНФ $D$ функции $f$ в условиях леммы литерал $x_i^{\sigma_i}$ входит как минимум в $t+1$ конъюнкцию.

Справедливость утверждения $c$ можно показать следующим образом. Пусть $D_0$ --- ДНФ функции $f_0(x_1,x_2,...,x_{i-1},x_{i+1},...,x_n)$, определяемой матрицей нулей $M_0 = M_{[k]}^{1,2,...,i-1,i+1,...,n}$. Так как $f(x_1,x_2,...,x_n)$ --- булева функция без смежных нулей, то $M_0$ тест.

Пусть $x_i^{\sigma_i}K_1,x_i^{\sigma_i}K_2,..., x_i^{\sigma_i}K_t$ --- конъюнкции, порождаемые разложениями $x_i^{\sigma_i}$. Пусть $\sigma(i) = \{\alpha_{r_1},\alpha_{r_2},..., \alpha_{r_s}\}$ --- множество, содержащее все такие строки матрицы $M$, что $M_{r_m}^{i} = \sigma_i, M_{r_m}^{[n]} = \alpha_{r_m}, 1\le m\le s$.

Рассмотрим следующую ДНФ
\[
D_f = D_0 \vee x_i^{\sigma_i\oplus1}\wedge\left(\bigvee\limits_{(\gamma_1,\gamma_2,...,\gamma_n) \in \sigma(j)} x_1^{\gamma_1} x_2^{\gamma_2}
\cdots x_{i-1}^{\gamma_{i-1}} x_{i+1}^{\gamma_{i+1}} \cdots x_n^{\gamma_n}\right) \vee x_i^{\sigma_i}\wedge\left(\bigvee\limits_{p=1}^t K_p\right)
\]

Покажем, что указанная ДНФ реализует $f(x_1,x_2,...,x_n)$. Пусть $\beta = (\beta_1,\beta_2,...,\beta_n)$ --- произвольная ненулевая точка $f$.

Если вектор $(\beta_1,\beta_2,...,\beta_{i-1},\beta_{i+1},...,\beta_n)$ отличен от нуля функции $f_0$, то $D_0[\beta]~=~1$.

Пусть теперь $\beta = \alpha \oplus e_i^n$, где точка $\alpha$ является нулем функции $f$. Если $\alpha \in \sigma(i)$, то $\beta$ покрывается одной из конъюнкций содержащих $x_i^{\sigma_i\oplus1}$. В противном случае рассмотрим существующее по условию леммы разрезание матрицы $M$ на части $M_1, M_2,..., M_t$, такое что каждая из указанных частей образует разбиение литерала $x_i^{\sigma_i}$  и, более того, конъюнкции, определяемые этими разбиениями, допустимы для $f$.

Пусть $M_j$ часть матрицы нулей содержащая $\alpha$, $x_i^{\sigma_i}K_j$ --- допустимая конъюнкция, порожденная $M_j$. Точка $\beta$ покрывается $x_i^{\sigma_i}K_j$ по лемме \ref{literal_coverage}. Лемма доказана.
\end{proof}
\begin{lemma}\label{literal_chi}
Пусть $\alpha \in B_k$ разложима по векторам $\{\alpha_i\}_{i=1}^t, \alpha_i \in B_k$, $i~\in~[1,t]$. Тогда справедливо неравенство
\[\chi(\alpha_1) + ... + \chi(\alpha_t) \ge \chi(\alpha)\]
\end{lemma}
\begin{ddef}
Литерал $x_i^{\sigma_i}$ назовем \textit{собственным} литералом конъюнкции $K$ ДНФ $D$ функции $f(x_1,x_2,...,x_n)$, если $K$ --- единственная конъюнкция $D$ содержащая $x_i^{\sigma_i}$.
\end{ddef}

Для доказательства основной теоремы о сложности полной функции нам потребуется следующая \textit{лемма о свойствах полной функции}
\begin{lemma}\label{lemma_complete_properties}
Для полной булевой функции $G_k(x_1, x_2, ..., x_n)$ выполнено
\begin{enumerate}
\item Если конъюнкция $K \in D$ антимонотонна, то $rank\; K \ge 3$;
\item Если $K$ --- единственная конъюнкция, содержащая $x_i^{\sigma_i}$, то $rank\; K \ge 3$;
\item Разложение литерала $x_i$ не может содержать литерал $\bar{x}_j$, $1\le~i,j\le~n, i\neq~j$;
\item Разложение $\bar{x}_i\wedge x_j$ не может содержать литералов веса больше либо равного $k/4$, если вес $x_i$ больше чем $k/4$, а литералы $\bar{x}_i$ и $x_j$ ортогональны;
\item Разложение $\bar{x}_i\wedge\bar{x}_j$ не может содержать литералов веса больше либо равного $k/3$, если веса $\bar{x}_i$ и $\bar{x}_i$ больше чем $k/3$, а литералы $x_i$ и $x_j$ ортогональны;
\end{enumerate}
\end{lemma}
\begin{proof}
По определению функции $G_k$, число единиц в каждом столбце ее матрицы не превосходит числа единиц. Функция $G_k$ приведенная, следовательно не существует пары литералов, образующих ортогональное разложение $\mathbb{I}_k$, и утверждения 1 и 2 справедливы.

Кроме того, $ind(x_i) \le ind(x_j), 1\le i,j \le n$. Так как $G_k$ приведенная, указанное неравенство доказывает утверждение 3.
Утверждения 4 и 5 являются прямыми следствиями леммы \ref{literal_chi}.
\end{proof}

В работе \cite{dj01} А.Г. Дьяконовым доказаны следующие две леммы для длины полной и приведенной функций от $n$ переменных:
\begin{lemma}\label{lemm_dj_1}
Длина всякой ДНФ полной функции $f$ от $n$ переменных не меньше $n$.
\end{lemma}
\begin{lemma}\label{lemm_dj_2}
Длина всякой приведенной функции $f$ от $n$ переменных, имеющей $k$ нулей, удовлетворяет неравенству
\begin{gather*}
|D| \le 2n + \left(k^2 - 5k\right)/2
\end{gather*}
в случае, если матрица нулей функции $f$ содержит единичную подматрицу с точностью до перестановок и инверсий столбцов.
\end{lemma}


Обозначим $\mathbb{M}$ множество литералов веса большего чем $k/3$.
\begin{lemma}\label{lemm_norm_card}
Число конъюнкций произвольной ДНФ $D$ полной функции $f$, содержащих литералы из $\mathbb{M}$, не меньше $n\left(1 - e^{-\log_2n/36}\right)$.
\end{lemma}
\begin{proof}
Выделим подматрицу $M'$ матрицы нулей полной функции, содержащую все столбцы веса, не превосходящего $k/3$. По неравенству Чернова число столбцов в $M'$ не превосходит $2^{k-1} e^{-k/36}$.

По лемме \ref{lemm_dj_2} существует ДНФ $D_0$, реализующая функцию заданную матрицей нулей $M'$, длина которой не превосходит
\[|D_0| \le 2t + \frac{k^2 - 5k}{2} < k_0 \cdot n = 6\cdot ne^{-\log_2n/36}
\]

Таким образом, если существует такая ДНФ $D_1$ функции $G_k$, что число конъюнкций, содержащих переменные из $\mathbb{M}$, меньше $n(1-k_0)$, то существует и ДНФ $D_2$ функции $G_k$, полученная из $D_1$ заменой всех конъюнкций, не содержащих переменные из $\mathbb{M}$, на ДНФ $D_1$. При этом длина ДНФ $D_2$, которая реализует $G_k$, меньше $n$. Последнее противоречит лемме \ref{lemm_dj_2}, поэтому число конъюнкций, содержащих литералы из $\mathbb{M}$, не меньше $n\left(1 - e^{-\log_2n/36}\right)$.
\end{proof}
\begin{theorema}\label{bound_lower}
Минимальная ДНФ полной булевой функции от $n$ переменных имеет ранг не меньший чем $3n\left(1 - e^{-\log_2n/36}\right)$.
\end{theorema}
\begin{proof}
Рассмотрим произвольную ДНФ $D$ функции $G_k$. Выделим в ней все конъюнкции, содержащие литералы из $\mathbb{M}$, и ограничимся далее их рассмотрением. Число $m$ таких конъюнкций по лемме \ref{lemm_norm_card} не меньше $n(1-e^{-\log_2n/36})$.

Пусть $k_1$ --- число конъюнкций, имеющих один собственный литерал; $k_2$ --- число конъюнкций, имеющих два собственных литерала. Отметим, что никакие 3 собственных литерала из $\mathbb{M}$ не могут входить в одну импликанту ДНФ $D$ по лемме \ref{lemma_complete_properties}.

Следовательно, число входящих в $D$ несобственных литералов(без учета кратности), не меньше $2|\mathbb{M}| - k_1 - k_2$. Из леммы \ref{lemma_complete_properties}(утверждение 2) следует, что если конъюнкция $K$ содержит единственный собственный литерал, то в $K$ содержится еще как минимум два несобственных литерала, откуда совокупная кратность всех несобственных литералов не меньше $2k_1$; если же конъюнкция $K$ содержит два собственных литерала, то $rank\;K\ge 3$ и как минимум один из них не содержится в $\mathbb{M}$.

Откуда имеем следующую оценку для ранга:
\[rank\; D \ge \max(2(2|\mathbb{M}| - 2k_2 - k_1), 2k_1) + k_1 + 3k_2.\]

Откуда при $k_1+k_2 \ge \left|\mathbb{M}\right|$ получаем $rank\;D\ge 4|\mathbb{M}| - k_1 - k_2 \ge 3|\mathbb{M}|$.
Кроме того, при $k_1 + k_2 \le |\mathbb{M}|$ имеем $rank\; D \ge 3k_1+3k_2 \ge 3|\mathbb{M}|$.

Из того, что $|\mathbb{M}| \ge n\left(1 - e^{-\log_2n/36}\right)$, при $n\rightarrow\infty$ получаем утверждение теоремы.
\end{proof}

\section{Алгоритм построения ДНФ~полной~функции}
В данном разделе предлагается эффективный метод построения ДНФ, содержащих небольшое число литералов, для булевых функций с малым числом нулей. В частности указанный метод позволят строить асимптотически минимальные ДНФ для \textit{полной булевой функции}.

В работе \cite{dj02} А.~Г.~Дьяконовым был предложен достаточно универсальный тестовый подход к построению ДНФ булевых функций. Основная идея подхода состоит в сведении построения ДНФ исходной функции $f(x_1,x_2...,x_n)$ к задаче построения ДНФ булевой функции $\phi(y_1,y_2,...,y_t)$, заданной матрицей нулей с меньшим числом столбцов такой, что матрица нулей $\phi$ является подматрицей матрицы нулей $f$ и, более того, является тестом. Однако сложность такого сведения в общем случае чрезвычайно высока. А именно, ДНФ $D_f$ может быть получена из ДНФ $D_{\phi}$ добавлением $(n-t)k$ конъюнкций, содержащих $(n-t)k(t+1)$ литералов, где число нулей функций $f$ и $\phi$ равно $k$.

В работе \cite{dj01} А.~Г.~Дьяконов рассматривал существенно более частный класс функций. А именно, функции матрицы нулей которых содержат единичную подматрицу максимального размера. Для указанных функций им были предложены алгоритмы, гарантирующие лучшие оценки сложности, чем в общем случае. Приведенные в работе \cite{dj01} алгоритмы позволили построить ДНФ полной функции, содержащей $n(1+o(1))$ конъюнкций и $\Theta(n\sqrt{\log_2 n})$ литералов.

Предлагаемый в настоящей работе для построения ДНФ алгоритм основан на конструкциях А.~Г.~Дьяконова. Однако для рассматриваемых далее классов булевых функций он обладают существенно более низкими оценками сложности по сравнению с алгоритмами, предложенными в \cite{dj02}. В частности, методы, представленные в работе, позволяют строить ДНФ полной функции, содержащую $n(1+o(1))$ конъюнкций и $3n(1+o(1))$ литералов.

Зафиксируем приведенную функцию $f(x_1,x_2,...,x_n)$ от $n$ переменных, имеющую $k$ нулевых точек и заданную матрицей нулей $M_f$. Пусть существует подматрица $T$ матрицы $M_f$, содержащая $t$ столбцов; без ограничения общности положим, что столбцы матрицы $T$ соответствуют столбцам с номерами $1,2,...,t$ матрицы $M_f$, так как в противном случае инвариантными преобразованиями Шеннона-Поварова функция $f(x_1,x_2,...,x_n)$ может быть преобразована к указанному виду. Пусть столбцы матрицы $M_f$, не входящие в $T$, образуют матрицу~$M'$. Литералы, ассоциированные со столбцами $T$, назовем \textit{тестовыми}, все остальные литералы назовем \textit{внешними}.

\begin{ddef}
\textit{Гиперграфом связей} $H_T := H_T[f(x_1,x_2,...,x_n)]$ функции $f(x_1,x_2,...,x_n)$~с~матрицей нулей $M_f$ и тестовой подматрицей $T$ назовем пару состоящую из гиперграфа и взаимно-однозначного отображения $\phi$ между литералами функции $f$ и вершинами гиперграфа, при котором
\begin{enumerate}
\item в гиперграфе присутствует 2-ребро если и только если вершинам ребра соответствует пара литералов $\{x_i, \bar{x}_i\},1\le i\le n$;
\item если и только если набор из $t$ литералов образует (ортогональное) разбиение $\mathbb{I}_k$, гиперграф содержит отвечающее ему $t$-ребро.
\end{enumerate}
\end{ddef}

Множество вершин гиперграфа $H$ обозначим $V(H)$, а множество его ребер --- $E(H)$. Далее под вершиной $x_i^{\sigma_i}$ будет подразумеваться вершина, являющаяся образом литерала $x_i^{\sigma_i}$ при отображении $\phi$. Вершины гиперграфа, которым соответствуют внешние(тестовые) литералы, назовем внешними(тестовыми) вершинами.

\begin{ddef}\label{hypergraph_path}
Пусть задан гиперграф связей $H_T[f(x_1,x_2,...,x_n)]$ функции $f(x_1,x_2,...,x_n)$ с матрицей нулей $M_f$ и тестовой подматрицей $T$.
\textit{Путем} между внешними литералами $x_i^{\sigma_i}$ и $x_j^{\sigma_j}$ в $H_T$, назовем последовательность переходов между внешними литералами внутри ребер гиперграфа, начинающуюся в $x_i^{\sigma_i}$ и заканчивающуюся $x_j^{\sigma_j}$.
\end{ddef}

Гиперграфы, полученные из гиперграфа связей удалением одного или нескольких ребер, назовем \textit{частичными}. Определение пути, данное для гиперграфа связей, естественным образом обобщается на все частичные гиперграфы. Как правило, нас будут интересовать только частичные гиперграфы связей с небольшим числом ребер.

\begin{figure}[tH]
\noindent\centering{
\includegraphics[scale=0.25]{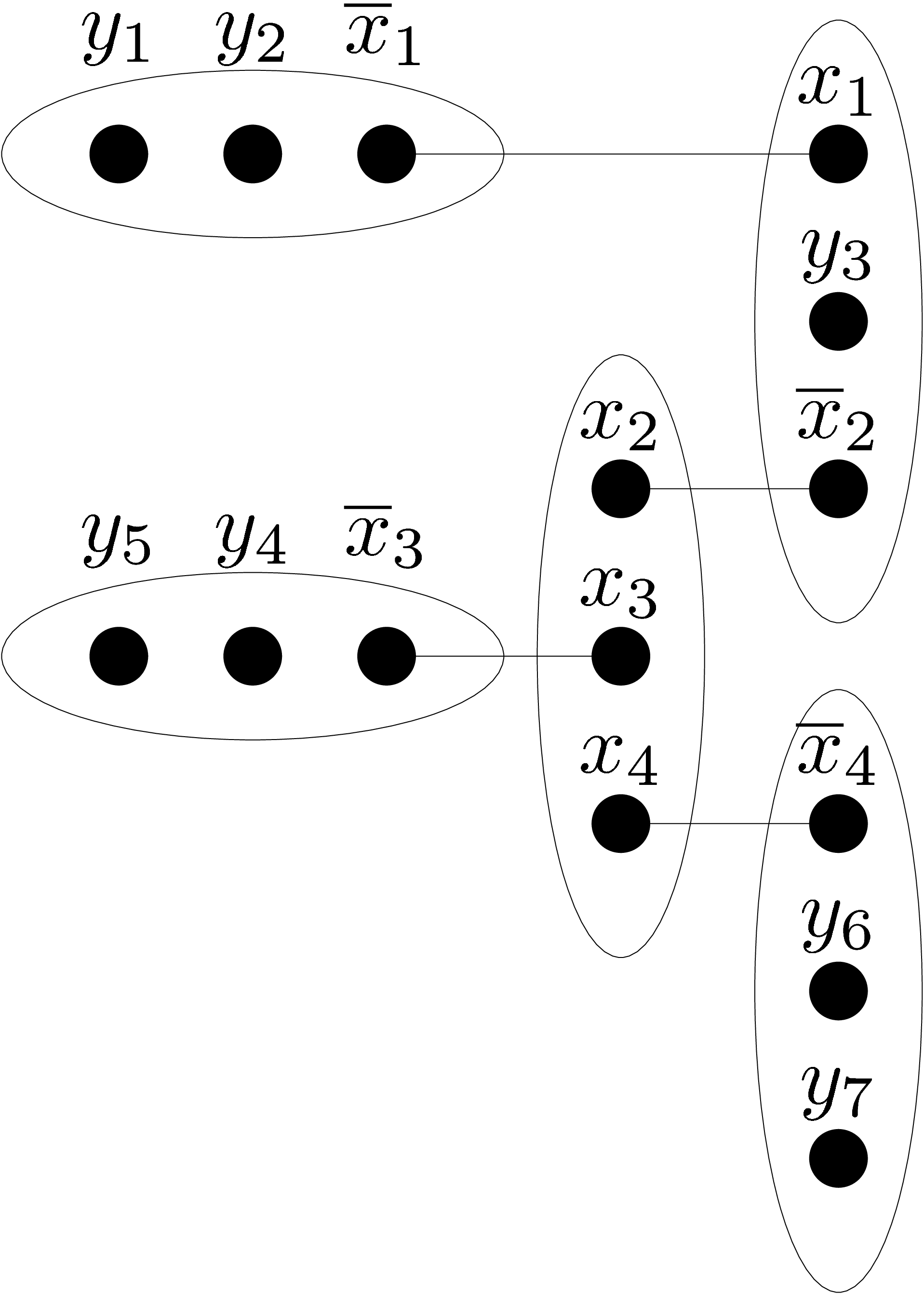}
}
\caption{Гиперграф связей из примера \ref{example_graph}.}
\label{pic_hypergraph}
\end{figure}
\begin{example}\label{example_graph}
Рассмотрим матрицу $M$, содержащую тестовую подматрицу $T$ в первых семи столбцах.
\[M = \left(
  \begin{array}{ccccccccccc}
    1 & 0 & 0 & 0 & 1 & 0 & 0 & 0 & 0 & 1 & 0 \\
    0 & 1 & 0 & 1 & 0 & 0 & 0 & 0 & 0 & 1 & 0 \\
    0 & 1 & 0 & 0 & 0 & 1 & 0 & 0 & 0 & 0 & 1 \\
    0 & 0 & 1 & 0 & 0 & 0 & 0 & 0 & 1 & 0 & 0 \\
    0 & 0 & 0 & 0 & 0 & 0 & 0 & 1 & 1 & 0 & 0 \\
    1 & 0 & 0 & 0 & 0 & 0 & 1 & 0 & 0 & 0 & 1 \\
  \end{array}
\right)\]
Матрица $M$ определяет функцию $f(y_1,y_2,...,y_7,x_1,x_2,x_3,x_4)$, литералы $y_1$ -- $y_7$ которой ассоциированы с $T$, а литералы $x_1$ -- $x_4$ с $M\setminus T$. На рисунке \ref{pic_hypergraph} изображен один из частичных гиперграфов связи функции $f$.
\end{example}

Назовем ребро гиперграфа \textit{корнем}, если оно содержит ровно один внешний литерал. Ребра гиперграфа назовем смежными, если они имеют хотя бы одну общую вершину. Размером ребра назовем число вершин в нем.

\begin{ddef}
Для приведенной функции $f(x_1,x_2,...,x_n)$ рассмотрим гиперграф $S_T$, полученный удалением из гиперграфа связей $H_T = H_T[f(x_1,x_2,...,x_n)]$ некоторых вершин и ребер. Гиперграф $S_T$ назовем \textit{последовательностью} (ортогональных) разложений $\mathbb{I}_k$, если
\begin{enumerate}
\item Всякий внешний литерал $x_i^{\sigma_i}$, являющийся вершиной $S_T$, входит ровно в два ребра, одним из которых является ребро $\{x_i, \bar{x}_i\}$;
\item Всякая внешняя вершина графа лежит хотя в одном пути, начало и конец которого являются корневыми;
\item смежными для ребер размера 2, являются ребра размера 3 и более, либо корневые ребра;
\end{enumerate}
\end{ddef}


\textit{Мощностью} последовательности ортогональных разложений $S_T$, построенной по тесту $T$, будем называть половину от числа вершин $S_T$, ассоциированных с внешними литералами гиперграфа. На рисунке \ref{pic_hypergraph} приведена одна из таких последовательностей мощности 4, построенная для определенных в примере \ref{example_graph} теста $T$ и функции $f$.

Рассмотрим ребро $e_j = \{x_{i_1}^{\sigma_{i_1}}, x_{i_2}^{\sigma_{i_2}}, ..., x_{i_t}^{\sigma_{i_t}}\}$ гиперграфа $S_T$. Литералы $x_{i_1}^{\sigma_{i_1}}, x_{i_2}^{\sigma_{i_2}}, ..., x_{i_t}^{\sigma_{i_t}}$ образуют ортогональное разбиение единицы(для матрицы $M_f$), следовательно, если $e_j$ не является ребром, соединяющим пару противоположных литералов, то $K[e_j] = x_{i_1}^{\sigma_{i_1}\oplus1} x_{i_2}^{\sigma_{i_2}\oplus1}\cdots x_{i_t}^{\sigma_{i_t}\oplus1}$ простая импликанта $f(x_1,x_2,...,x_n)$ .

\begin{lemma}\label{hyperspan_elementary_coverage}
Пусть для приведенной функции $f(x_1,x_2,...,x_n)$ и теста $T$ существует последовательность ортогональных разложений $S_T$, содержащая все внешние литералы.
Тогда функция $f$ реализуется ДНФ $D_{S_T}$, определяемой равенством
\[D_{S_T} = D_T \vee \bigvee\limits_{e\in E(S_T)} K[e]\]
где $K[e] = x_{i_1}^{\sigma_{i_1}\oplus1} x_{i_2}^{\sigma_{i_2}\oplus1}\cdots x_{i_t}^{\sigma_{i_t}\oplus1}$ при $e = \{x_{i_1}^{\sigma_{i_1}}, x_{i_2}^{\sigma_{i_2}},..., x_{i_t}^{\sigma_{i_t}}\}$.
\end{lemma}
\begin{proof}
Рассмотрим функцию $p$, определенную на парах состоящих из ребер гиперграфа $C_T$ и точек булева куба $B_n$. Определим значение $p$ для ребра $e = \{x_{i_1}^{\sigma_{i_1}}, x_{i_2}^{\sigma_{i_2}},..., x_{i_t}^{\sigma_{i_t}}\}$ и точки $\beta = (\beta_1,\beta_2,...,\beta_n)$ как число выполненных равенств $\beta_{i_j} = \sigma_{i_j}$. Заметим, что если $p(e,\beta) = 0$, точка $\beta$ покрывается конъюнкцией $K[e]$, а если $p(e,\beta)=1$ при всех $e\in E(S_T)$, то
$\beta$ является нулем $f$ или покрывается ДНФ $D_T$.

Сумма $\sum\limits_{e \in E(S_T)} p(e, \beta)$ является инвариантом для всех точек $\beta$, совпадающих с одним из нулей функции в тестовых столбцах, и равна числу ребер $S_T$, порождающих отличные от нуля конъюнкции. Для доказательства утверждения леммы остается показать, что не существует точки $\beta \in B_n$, отличной от перечисленных в $M_f$, для которых выполнены равенства $p(e,\beta) = 1$, при всех $e\in E(S_T)$.

Докажем утверждение индукцией по числу столбцов $m$, принадлежащих матрице $M_f$, но не принадлежащих $T$. Для удобства дальнейших рассуждений положим, что столбцы с номерами $1,2,...,m$ не принадлежат $T$.

Пусть $m=1$. В этом случае всякая последовательность ортогональных разложений $\mathbb{I}_k$ содержит как минимум 3 ребра, тестовые литералы первого из которых образуют ортогональное разложение $x_i$, литералы второго --- ортогональное разложение $\bar{x}_i$, а третье ребро $\{x_i,\bar{x}_i\}$. Указанные ребра для каркаса $C_T$ обозначим $e_1,e_2$ и $e_3$ соответственно. Осталось заметить, что всякая точка булева куба, не совпадающая ни с одним из нулей функции в тестовых столбцах, покрывается ДНФ $D_T$; всякая же точка $B_k$, отличающая от нуля $f$ только в первой координате, покрывается $K[e_1]$ или $K[e_2]$.

Пусть $m = t+1, t\ge1$. Пусть $\beta\in B_n$ отлична от нулей $f$ и не покрывается $D_T$. Следовательно точка $\beta$ совпадает с одним из нулей функции, перечисленных в $M_f$, в тестовых столбцах, обозначим указанный ноль $\alpha$. Пусть $\alpha$ и $\beta$ отличаются в разряде $i$. Литералы $x_i$ и $\bar{x}_i$ внешние. Обозначим $e_0 = \{x_i,\bar{x}_i\}$; $x_i\in e_1, e_1\neq e_0$; $x_i\in e_2, e_2\neq e_0$. По построению $S_T$ не имеет иных ребер, содержащих литералы $x_i$ и $\bar{x}_i$.

Пусть $\beta_i = 0 \Rightarrow \alpha_i = 1$(Случай $\beta_i = 1$ рассматривается аналогично). Откуда $p(e_1, \beta) = 1$ только в случае, если существует внешний литерал $x_j^{\sigma_j} \in e_1: \beta_j = \sigma_j$. По определению $S_T$ литерал $x_j^{\sigma_j}$ входит только в 2 ребра, а именно, в ребро $e_1$ и в ребро $\{x_j, \bar{x}_j\}$; ребро, в которое входит литерал $x_j^{\sigma_j\oplus1}$, обозначим $e_3$. Заметим, что $p(e_3,\alpha) = 1$, а $p(e_3\setminus x_j^{\sigma_j\oplus1}, 0)$. Следовательно, точка $\beta$ может быть нулем функции только в том случае, если существует внешний литерал $x_r^{\sigma_r} \in e_3: \beta_r = \sigma_r$. Так как удаление ребра $\{x_j\, \bar{x}_j\}$ разбивает гиперграф на пару непересекающихся по внешним вершинам гиперграфов, анализ сведен к гиперграфу с меньшим числом внешних вершин. Лемма доказана.
\end{proof}
\begin{ddef}\label{hypergraph_span}
Для приведенной функции $f(x_1,x_2,...,x_n)$ и теста $T$ рассмотрим множество $\bigcup\limits_{i=1}^t S^i_T$. Частичный гиперграф $C_T$ гиперграфа связей  $H_T[f(x_1,x_2,...,x_n)]$, полученный из $\bigcup\limits_{i=1}^t S^i_T$ удалением кратных ребер назовем \textit{каркасом, построенным по тесту $T$}, если $C_T$ содержит все внешние вершины.
\end{ddef}

Число ребер каркаса, построенного по последовательностям ортогональных разложений, может быть существенно меньше, чем сумма числа ребер входящих в последовательности разложений. Указанный эффект имеет место, когда последовательности, имеют большое число общих ребер. Этот эффект является мотивацией для использования в дальнейших рассуждениях каркасов, а не последовательностей ортогональных разложений.

\begin{lemma}\label{hyperspan_coverage}
Рассмотрим приведенную булеву функцию $f(x_1,x_2,...,x_n)$, заданную матрицей нулей $M_f$ с выделенным тестом $T$. Обозначим $D_T$ некоторую ДНФ булевой функции, определяемой матрицей нулей $T$. Пусть, кроме того, существует каркас $C_T[f(x_1,x_2,...,x_n)]$, содержащий все внешние литералы. Тогда функция $f$ реализуется ДНФ $D$, которая определяется равенством
\[D = D_T \vee \sum_{e \in E(C_T)} K[e]\]
где $K[e] = x_{i_1}^{\sigma_{i_1}\oplus1} x_{i_2}^{\sigma_{i_2}\oplus1}\cdots x_{i_t}^{\sigma_{i_t}\oplus1}$ при $e = \{x_{i_1}^{\sigma_{i_1}}, x_{i_2}^{\sigma_{i_2}},..., x_{i_t}^{\sigma_{i_t}}\}$.
\end{lemma}
\begin{proof}
Заметим, что все возможные нули функции совпадают со строками матрицы нулей в тестовых столбцах. По построению, для каждой конъюнкции из $D\setminus D_T$ не выполнен ровно один входящий в нее литерал, следовательно все нули функции $f(x_1,x_2,...,x_n)$, определяемые матрицей $M_f$, остаются непокрытыми.

Пусть $\beta$ некоторый ноль $D$, не являющийся нулем $f$. Пусть $\alpha \in M_f$ такой, что $\alpha$ и $\beta$ совпадают в тестовых столбцах. Кроме того, пусть $i:\alpha_i\neq\beta_i$. Выделим из $C_T$ гиперграф $S_T$, содержащий $x_i$ и $\neq{x}_i$. Воспользуемся утверждением леммы \ref{hyperspan_elementary_coverage}.

\end{proof}

Следствием доказанного утверждения является лемма
\begin{lemma}\label{hypergraph_span_union}
Рассмотрим приведенную булеву функцию $f(x_1,x_2,...,x_n)$, заданную матрицей нулей $M_f$ с выделенным тестом $T$. Обозначим $D_T$ некоторую ДНФ булевой функции, определяемой матрицей нулей $T$. Пусть, кроме того, существует некоторое множество каркасов $\mathbb{C}$, обладающих общим тестом $T$ и содержащих в совокупности все внешние литералы. Тогда функция $f$ реализуется ДНФ $D$, которая определяется равенством
\[D = D_T \vee \sum\limits_{C_T \in\mathbb{C}} \quad\sum_{e \in E(C_T)} K[e]\]
где $K[e] = x_{i_1}^{\sigma_{i_1}\oplus1} x_{i_2}^{\sigma_{i_2}\oplus1}\cdots x_{i_t}^{\sigma_{i_t}\oplus1}$ при $e = \{x_{i_1}^{\sigma_{i_1}}, x_{i_2}^{\sigma_{i_2}},..., x_{i_t}^{\sigma_{i_t}}\}$, .
\end{lemma}

Указанные леммы дают эффективные алгоритмы построения ДНФ, когда тест и каркасы могут быть построены эффективно. Это характерно для булевых функций высокой размерности с небольшим числом нулей. Одним из примеров указанных лемм на практике является построение ДНФ \textit{полной булевой функции}. Наиболее наглядно построение ДНФ с использованием каркасов, порожденных цепями булева куба.

Для дальнейших построений воспользуемся леммой Анселя о покрытии булева куба цепями, доказательство которой можно найти~в~\cite{ja10}:
\begin{lemma} Булев куб $B^n$ можно покрыть $\dbinom n{]n/2[}$ непересекающимися симметричными цепями $C_i$ так, что:
\begin{enumerate}
\item   число цепей длины $n - 2p+1$, где p = $1, . . . , \lceil n/2\rceil$, равно $\dbinom np~-~\dbinom n{p-1}$;
\item для любых трех вершин $v_1, v_2, v_3$, образующих цепь и принадлежащих одной и той же цепи $C_i$, дополнительная вершина принадлежит
цепи $C_j$ меньшей длины.
\end{enumerate}
\end{lemma}

А.~Г.~Дьяконов в работе \cite{dj02} рассматривал булевы функции заданные матрицами нулей имеющими единичную подматрицу размера $k\times k$, где $k$ число нулей функции. Для реализации функции $f(x_1,x_2,...,x_n)$, матрица нулей которой содержит единичную подматрицу в первых $k$ столбцах, А.~Г.~Дьяконов предложил использовать ДНФ
\begin{gather}\label{formula_identity_matrix}
D = \bigvee\limits_{1\le i<j\le k}\{x_ix_j\} \vee \bigvee\limits_{y=k+1}^n \left\{\{x_y\wedge \bigwedge_{s\in E(y)} \bar{x}_s\} \vee \{\bar{x}_y\wedge \bigwedge_{t\in Z(y)} x_t\}\right\}
\end{gather}

В работе \cite{dj02} было доказано, что указанная ДНФ действительно реализует функцию $f$ и число ее конъюнкций равно $2n + (k^2 - 5k)/2$. Заметим, что число литералов в указанной ДНФ равно $2n(k+1) - (k^2+3k)$. Указанная формула будет использована в доказательстве следующей теоремы.

\begin{theorema}\label{bound_upper}
Полная функция $F_k$ с числом нулей $k$ не менее 8 может быть реализована ДНФ ранга, не превосходящего~$3n + 6n/\log_2 n + 6n^{0.93}\log_2n$
\end{theorema}
\begin{proof}
Обозначим $M_{F_k}$ матрицу нулей указанной функции. Пусть тест $T_\lambda$ состоит из множества столбцов веса, не превосходящего $\lambda(k)$, $]k/4[\le\lambda(k) \le k/2$; точное значение $\lambda(k)$ будет выбрано позднее.

Множество цепей длины $m$ обозначим $\mathbb{C}(m)$. Для цепи $c_l = \{x_{i_1}^{\sigma_{i_1}}, x_{i_2}^{\sigma_{i_2}},...,x_{i_m}^{\sigma_{i_m}}\} \in \mathbb{C}(m)$ каркас $C_T(c(l))$ определим равенством
\[C_T(l) =
    \{y_1(c_l), y_2(c_l), x_{i_1}^{\sigma_{i_1}}\}
    \cup \bigcup\limits_{j=1}^{m-1}     \{x_{i_j}^{\sigma_{i_j}}, z_{i_j}, x_{i_{j+1}}^{\sigma_{i_{j+1}}\oplus1}\}
    \cup   \{x_{i_m}^{\sigma_{i_m}\oplus1}, y_3(c_l), y_4(c_l)\},
\]
где $y_1(c_l), y_2(c_l)$ литералы, образующие ортогональное разбиение $x_{i_1}^{\sigma_{i_1}\oplus 1}$; $y_3(c_l), y_3(c_l)$ тестовые литералы, образующие ортогональное разбиение $x_{i_m}^{\sigma_{i_m}}$. Литерал $z_{i_j}$ определяется равенством $z_{i_j} = x_{i_j}^{\sigma_{i_j}} \oplus x_{i_{j+1}}^{\sigma_{i_{j+1}}\oplus1}$.

Так как тест $T_\lambda$ содержит все литералы веса, не превосходящего $k/4$, литералы $y_1$ -- $y_4$ и $z_{i_1}$~--~$z_{i_{m-1}}$  могут быть выбраны тестовыми для любой цепи.

Каждому ребру $e$ каркаса может быть сопоставлена конъюнкция $K[e]$, состоящая из произведений отрицаний литералов, содержащихся в ребре. Таким образом, число литералов ДНФ $D_{C_T}$, построенной по каркасу $C_{T(c_l)}$, равна $3(m+1)$; каркас при этом содержит $2m$ внешних литералов.

По лемме \ref{hypergraph_span_union} ДНФ $D_\lambda$ реализует функцию $f$:
\[D = D_T \vee \bigvee\limits_{m\le k-2\lambda(k)}\bigvee\limits_{c_p \in\mathbb{C}_m}\bigvee\limits_{e\in C_T(c(l))}K[e],\]
где ДНФ $D_T$ построена по тесту $T$ в соответствии с формулой \ref{formula_identity_matrix}.

Число литералов $l(D_\lambda)$ равно
\begin{multline}
l(D_\lambda) =
    2(k+1)\left(\sum\limits_{i\le\lambda(k)} \dbinom ki\right) - (k^2 +3k) +
    3\left(\sum\limits_{i > \lambda(k)}\dbinom ki \right) +
    3\left(\dbinom k{]k/2[} - \dbinom k{\lambda}\right)
\end{multline}

Оценим полученное число сверху, используя энтропийное неравенство на биномиальные коэффициенты:
\[l(D_\lambda) < 3n + 3\cdot2^k/\sqrt{k} + 2(k+1)2^{kH(\lambda/k)}\]
Функция энтропии $H(x) = -x\log_2{x} - (1-x)\log_2{(1-x)}$ монотонно возрастает в $(0,1)$, следовательно выберем $\lambda$ наименьшим возможным, а именно, $\lambda = ]k/4[$, при $k > 8$ справедливо
\[l(D_\lambda) < 3n + 6n/\log_2 n + 6n^{0.93}\log_2n.\]
Полученная оценка завершает доказательство теоремы.
\end{proof}
\begin{Remark}
Приведенная конструкция в теореме не является оптимальной по числу используемых конъюнкций, однако обеспечивает нужную асимптотику и проста для пояснений.
\end{Remark}

Следствием теорем \ref{bound_lower} и \ref{bound_upper} является утверждение, устанавливающее асимптотику сложности полной булевой функции.
\begin{theorema}\label{bound_equal}
Полная функция может быть реализована ДНФ, содержащей $3n(1+o(1))$ литералов и $n(1+o(1))$ при $n\rightarrow\infty$; при этом ни одна из границ асимптотически не улучшаема.
\end{theorema}
\begin{example}
Приведем ДНФ получаемую в результате работы алгоритма для функции $G_6$, если в качестве тестовой выбрана единичная подматрица. Выберем ребра каркаса  следующим образом:
\begin{multline*}
  E(C_T) =  \biggl\{ \{  {x}_{1},{x}_{4},\bar{x}_{5} \};
    \{ {x}_{5},{x}_{8},\bar{x}_{13} \};
    \{ {x}_{13},{x}_{16},\bar{x}_{29} \};
    \{ {x}_{29},{x}_{2},{x}_{32} \};
    \{ {x}_{2},{x}_{1},\bar{x}_{3} \};
    \{ {x}_{3},{x}_{4},\bar{x}_{7} \};\\
    \{ {x}_{8},{x}_{7},\bar{x}_{15} \};
    \{ {x}_{16},{x}_{15},\bar{x}_{31} \};
    \{ {x}_{2},{x}_{4},\bar{x}_{6} \};
    \{ {x}_{6},{x}_{8},\bar{x}_{14} \};
    \{ {x}_{14},{x}_{16},\bar{x}_{30} \};
    \{ {x}_{30},{x}_{1},{x}_{32} \};
    \{ {x}_{2},{x}_{8},\bar{x}_{10} \};\\
    \{ {x}_{10},{x}_{1},\bar{x}_{11} \};
    \{ {x}_{11},{x}_{16},\bar{x}_{27} \};
    \{ {x}_{27},{x}_{4},\bar{x}_{32} \};
    \{ {x}_{8},{x}_{1},\bar{x}_{9} \};
    \{ {x}_{9},{x}_{16},\bar{x}_{25} \};
    \{ {x}_{25},{x}_{2},\bar{x}_{27} \};
    \{ {x}_{11},{x}_{1},\bar{x}_{12} \};\\
    \{ {x}_{12},{x}_{16},\bar{x}_{28} \};
    \{ {x}_{28},{x}_{2},\bar{x}_{30} \};
    \{ {x}_{16},{x}_{2},\bar{x}_{18} \};
    \{ {x}_{18},{x}_{1},\bar{x}_{19} \};
    \{ {x}_{19},{x}_{4},\bar{x}_{23} \};
    \{ {x}_{23},{x}_{1},\bar{x}_{24} \};
    \{ {x}_{16},{x}_{1},\bar{x}_{17} \};\\
    \{ {x}_{17},{x}_{4},\bar{x}_{21} \};
    \{ {x}_{21},{x}_{2},\bar{x}_{23} \};
    \{ {x}_{4},{x}_{16},\bar{x}_{20} \};
    \{ {x}_{20},{x}_{2},\bar{x}_{22} \};
    \{ {x}_{22},{x}_{1},\bar{x}_{23} \};
    \{ {x}_{24},{x}_{2},\bar{x}_{26} \};
    \{ {x}_{26},{x}_{4},\bar{x}_{30} \}
\biggr\}.
\end{multline*}
Для ДНФ $D$ реализующей функцию $F_6$ получаем
\begin{multline*}
D = x_1x_2 \vee x_1 x_4 \vee x_1x_8 \vee x_1x_{16} \vee x_1\bar{x}_{31} \vee
    x_2x_4 \vee x_2x_8 \vee x_2x_{16} \vee x_2\bar{x}_{31}  \vee
    x_4x_8 \vee x_4x_{16} \vee x_4\bar{x}_{31} \vee\\
    x_8x_{16} \vee x_{8}\bar{x}_{31} \vee
    x_{16}\bar{x}_{31} \vee
    \bar{x}_1    \bar{x}_2    \bar{x}_4    \bar{x}_8    \bar{x}_{16}    {x}_{31} \vee
    \bar{x}_{1}\bar{x}_{4}{x}_{5}  \vee
     \bar{x}_{5}\bar{x}_{8}{x}_{13} \vee
     \bar{x}_{13}\bar{x}_{16}{x}_{29} \vee
     \bar{x}_{29}\bar{x}_{2}{x}_{31} \vee
     \bar{x}_{2}\bar{x}_{1}{x}_{3} \vee\\
     \bar{x}_{3}\bar{x}_{4}{x}_{7} \vee
     \bar{x}_{8}\bar{x}_{7}{x}_{15} \vee
     \bar{x}_{16}\bar{x}_{15}{x}_{31} \vee
     \bar{x}_{2}\bar{x}_{4}{x}_{6} \vee
     \bar{x}_{6}\bar{x}_{8}{x}_{14} \vee
     \bar{x}_{14}\bar{x}_{16}{x}_{30} \vee
     \bar{x}_{30}\bar{x}_{1}{x}_{31} \vee
     \bar{x}_{2}\bar{x}_{8} {x}_{10} \vee
     \bar{x}_{10}\bar{x}_{1} {x}_{11} \vee\\
     \bar{x}_{11}\bar{x}_{16} {x}_{27} \vee
     \bar{x}_{27}\bar{x}_{4}{x}_{31} \vee
     \bar{x}_{8}\bar{x}_{1}{x}_{9} \vee
     \bar{x}_{9}\bar{x}_{16}{x}_{25} \vee
     \bar{x}_{25}\bar{x}_{2}{x}_{27} \vee
     \bar{x}_{11}\bar{x}_{1}{x}_{12} \vee
     \bar{x}_{12}\bar{x}_{16}{x}_{28} \vee
     \bar{x}_{28}\bar{x}_{2}{x}_{30} \vee\\
     \bar{x}_{16}\bar{x}_{2}{x}_{18} \vee
     \bar{x}_{18}\bar{x}_{1}{x}_{19} \vee
     \bar{x}_{19}\bar{x}_{4}{x}_{23} \vee
     \bar{x}_{23}\bar{x}_{1}{x}_{24} \vee
     \bar{x}_{16}\bar{x}_{1}{x}_{17} \vee
     \bar{x}_{17}\bar{x}_{4}{x}_{21} \vee
     \bar{x}_{21}\bar{x}_{2}{x}_{23} \vee
     \bar{x}_{4}\bar{x}_{16}{x}_{20} \vee\\
     \bar{x}_{20}\bar{x}_{2}{x}_{22} \vee
     \bar{x}_{22}\bar{x}_{1}{x}_{23} \vee
     \bar{x}_{24}\bar{x}_{2}{x}_{26} \vee
     \bar{x}_{26}\bar{x}_{4}{x}_{30}.
\end{multline*}
\end{example}

\section{Заключение}

В настоящей работе установлена асимптотически точная минимальная граница сложности ДНФ полной функции по числу литералов и конъюнкций. К построению ДНФ данной функции сводится построение ДНФ почти всех булевых функций с числом нулей, не превосходящим $\log_2 n - \log_2 \log_2 n + 1$ \cite{zk85dan,zk86}.

Предложена методика получения нижних на число литералов, содержащихся в ДНФ, которая может быть применена как к анализу конкретных ДНФ конкретных функций, так и к анализу сложности произвольных ДНФ широких классов функций.

Получен алгоритм построения ДНФ булевых функций специального вида, позволяющий строить дизъюнктивные нормальные формы достаточно малого ранга.

Показано, что применение указанного алгоритма для полной функции позволяет построить ДНФ, имеющую как асимптотически минимальное число конъюнкций, равное $n(1+o(1))$, при~$n\rightarrow\infty$, так и асимптотически минимальный ранг, равный $3n(1+o(1))$, при $n\rightarrow\infty$.


\bibliographystyle{apalike}
\bibliography{rank_jvm}

\end{document}